\documentclass[11pt]{amsart} 
\usepackage{amsmath,amssymb} 
\usepackage{color}
\usepackage[latin1]{inputenc}
\usepackage{color}
\usepackage[dvipsnames]{xcolor}

\newtheorem{theorem}{Theorem} 
\newtheorem{lem}[theorem]{Lemma} 
\newtheorem{prop}[theorem]{Proposition} 
\newtheorem{rem}[theorem]{Remark} 

\newtheorem{defi}[theorem]{Definition}

\newcommand{\cqfd}{{\nobreak\hfil\penalty50\hskip2em\hbox{}\nobreak\hfil
$\square$\qquad\parfillskip=0pt\finalhyphendemerits=0\par\medskip}}

\newcommand{\R}{{\mathbb R}}

\newcommand{\rt}{{\rho_T}} 
\newcommand{\into}{{\int_\Omega}} 
\newcommand{\mo}{{\mu(\rho)}} 
\newcommand{\mop}{{\mu'(\rho)}} 
\newcommand{\al}{{\sqrt{2r}}}

\DeclareMathOperator*{\dive}{div}

\title[Long time behaviour]{On the Exponential decay for Compressible Navier-Stokes-Korteweg equations\\ with a Drag Term}

\author[D. Bresch]{D. Bresch}
\address{Laboratoire de Math\'ematiques, CNRS UMR 5127,
Universit\'e  Savoie Mont-Blanc, 73376 Le Bour\-get-du-Lac, France;\\
\rm  email: Didier.Bresch@univ-smb.fr}
 
\author[M. Gisclon]{M. Gisclon}
\address{Laboratoire de Math\'ematiques, CNRS UMR 5127,
Universit\'e Savoie Mont-Blanc, 73376 Le Bour\-get-du-Lac, France; \\
\rm email: gisclon@univ-smb.fr}

\author[I. Violet]{I. Lacroix-Violet} 
\address{Laboratoire de Math\'ematiques, CNRS UMR 8524 
Universit\'e de Lille, 59655 Villeneuve d'Ascq, France; Team Rapsodi INRIA Lille Nord Europe
  \rm e-mail: ingrid.violet@univ-lille1.fr}

\author[A. Vasseur]{A. Vasseur} 
\address{Department of Mathematics, University of Texas at Austin, 2515 Speedway Stop C1200 Austin, TX 78712, USA;\\
\rm email: vasseur@math.utexas.edu}

\begin{document}

\maketitle

\begin{abstract}

In this paper, we consider global weak solutions to compressible Navier-Stokes-Korteweg equations with density dependent viscosities, in a periodic domain $\Omega = \mathbb T^3$, with a linear drag term with respect to the velocity. The main result concerns the exponential decay  to equilibrium of such solutions using log-sobolev type inequalities. In order to show such a result, the starting point is  a global weak-entropy solutions definition,  introduced in {\sc  D. Bresch, A. Vasseur} and {\sc C.~Yu} \cite{BrVaYu}. Assuming extra assumptions on the shear viscosity when the density is close to vacuum and when the density tends to infinity, we conclude the exponential decay to equilibrium. Note that our result covers the quantum Navier-Stokes system with a drag term. 
\end{abstract}

\medskip

{\bf AMS Classification.} 35B40, 35B45, 35K35, 76Y05. 

\bigskip\noindent{\bf Keywords.}  Navier-Stokes-Korteweg model, long-time behaviour, exponential decay,  quantum models, drag term.


\section{Introduction}
  The goal of this paper is to study the long-time behaviour of solutions of Navier-Stokes-Korteweg models with a drag term of type $r_2  \, \rho   \, u$ (for $r_2 > 0$ a constant) in a periodic domain $\Omega= {\mathbb T}^3$ using  a $\kappa$ relative entropy generalizing the one by \cite{BrGiLa}. 
   Without loss of generality we fix $|\Omega|=1$.
   More precisely the system under consideration reads
\begin{eqnarray}
&&\partial_t\rho + {\rm div}(\rho u) = 0, \nonumber \\ 
&&\partial_t (\rho u)   + {\rm div}(\rho u\otimes u) + \nabla p(\rho) \nonumber \\
&& \hskip2cm - 2  \, {\rm div}(\mu(\rho) {\mathbb D}(u)) 
     - \nabla (\lambda(\rho){\rm div} \, u) \nonumber \\
 && \hskip2cm - 2 \,  \varepsilon  \,  \rho \,  \nabla \Bigl(\sqrt{K(\rho)} \Delta
         \int_0^\rho \sqrt{K(s)} \, ds\Bigr)    + r_2 \,  \rho \, u = 0\nonumber      
\end{eqnarray} 
where $\varepsilon >0$ is the Planck constant and ${\mathbb D}(u)=\left(\nabla u+ {}^t\nabla u\right)/2$ stands for the symmetric part of the velocity gradient. 
The pressure state law reads $p(\rho)= a \, \rho^\gamma$ where $a>0$
and $\gamma \geq 1$ are constants. 
The shear and bulk viscosities are assumed to satisfy the BD relation
\begin{equation}\label{BDrelation}
\lambda(\rho) = 2(\mu'(\rho)\rho -\mu(\rho)).
\end{equation}
The shear viscosity and the capillarity coefficient are linked through the relation  $K(\rho)= (\mu'(\rho))^2/\rho$.

Readers interested by Korteweg type systems are referred to the following articles and books: \cite{Ko, Va, CaHi, DuSe, RoWi, Ni, HeMa, AnMa, AnMa1, AnSp1} and references cited therein. Note that the Korteweg type models contain the class of quantum models which corresponds to $K(\rho)=1/\rho$ which gives $\mu(\rho)=\rho$ and, with the BD relation \eqref{BDrelation}, $\lambda(\rho)=0$. 

Quantum fluid models have attracted a lot of attention in the last decades due to the variety of applications. Indeed, such models can be used to describe superfluids \cite{LoMo}, quantum semiconductors \cite{FeZho}, weakly interacting Bose gases \cite{Gr} and quantum trajectories of Bohmian mechanics \cite{Wy}. Recently some dissipative quantum fluid models have been derived. In par\-ticular,  under some assumptions and using a Chapman-Enskog expansion in Wigner equation, the authors have obtained in \cite{BruMe09} the so-called quantum Navier-Stokes model. Roughly spea\-king, it corresponds to the classical Navier-Stokes equations with a quantum correction term.  The main difficulties of such models lie in the highly nonlinear structure of the third order quantum term and the proof of positivity (or non-negativity) of the particle density. 

In \cite{VaYu1}, A. Vasseur and C. Yu have proven the global-in-time existence of weak solutions of the quantum Navier-Stokes equations with two drag terms of type $r_0  \, u$ and $r_1 \,  \rho |u|^2 \,  u$.  Their result is still valuable in the case $r_1=0$ and their proof is based on a Faedo-Galerkin ap\-pro\-xi\-ma\-tion (following the ideas of \cite{Ju}) and the BD entropy (see \cite{BrDe2003, BrDeLi2003}). Note that the result is also still valuable with the add of a third drag force term of type $r_2 \,  \rho  \, u$ because the term does not perturb the uniform estimates and stable in 
limit process. In the same time, M. Gisclon and I. Lacroix-Violet \cite{GiLa} also obtained the existence of global-in-time weak solutions for the same model using a cold pressure instead of drag terms with the same method of Faedo-Galerkin approximation. In \cite{VaYu2}, the authors use the result obtained in  \cite{VaYu1} and pass to the limits $\varepsilon, r_0, r_1$ tend to zero to prove the existence of global-in-time weak solutions to degenerate compressible Navier-Stokes Equations with $\mu(\rho)=\mu \,  \rho$ and $\lambda(\rho)=0$. 
Such existence of global existence of weak solutions has also been obtained at the same time and independently in \cite{LiXi}.  Note that to prove the result in \cite{VaYu2}, they need uniform (with respect to $r_0, r_1$) estimates to pass to the limit $r_0, r_1$ tend to $0$. To this end they have to firstly pass to the limit $\varepsilon$ tends to $0$. 

Recently in \cite{LaVa}, global existence of weak solutions for the quantum Navier-Stokes
 Equations has been proved without drag terms. The method is based on the construction of weak solutions that are renormalized in the velocity variable for the system with drag terms $r_0 u$ and $r_1 \rho |u|^2 u$ and uses the stability part of the result to pass to the limit $r_0, r_1$ tend to zero to obtain weak renormalized solutions of the system without drag terms. All weak renormalized solution being a weak solution, the existence of global weak solutions for the quantum Navier-Stokes system without drag terms is shown. Adding an extra drag term $r_2 \,  \rho \, u$ to the system does not change the result: it does not perturb the estimates and is stable in limit passage. Note that the construction being uniform with respect to the Planck constant $\varepsilon$, the authors also perform the semi-classical limit $\varepsilon \to 0$ to the associated compressible Navier-Stokes equations. It is important to remark that a global weak solution of the quantum Navier-Stokes equations in the sense of \cite{LaVa} is also weak solution of the corresponding augmented system as introduced in \cite{BrDeZa}.  
Note also the paper \cite{AnSp} concerning the global existence for the quantum Navier-Stokes system using the approximate procedure introduced by J. Li and Z.P. Xin (see\cite{LiXi}). 

We also mention the recent paper \cite{AnSp1} where they consider the global existence of weak solutions to Navier-Stokes-Korteweg model extending the method of truncation, regularization and renormalization introduced by A. Vasseur and C. Yu in \cite{VaYu2}  and generalized by I.~Lacroix-Violet and A. Vasseur in \cite{LaVa}.

Finally, let us cite \cite{BrVaYu} where the existence of global weak solutions for some Navier-Stokes-Korteweg system in $\Omega= {\mathbb T}^3$ is obtained. The proof is performed around four main ideas. The first one is to introduce the equation satisfied by $\mu(\rho)$ in the definition of weak solutions assuming $\mu(\rho_0)$ in $L^1(\Omega)$ to relax the constraints on the pressure exponent. The second one consists in extending the two-velocities framework, introduced by D. Bresch, B. Desjardins and E. Zatorska in \cite{BrDeZa}, with capillarity and more general drag terms using a generalization of the quantum B\"ohm identity developed in \cite{BrCoNoVi}. The third one is the proof of a generalized high-order log-Sobolev dissipation inequality similar to the one used in \cite{Ju} and established by A. J\"ungel and D. Matthes in \cite{JuMa}. Finally the last one is the extend of the renormalized framework, introduced by I.~Lacroix-Violet and A. Vasseur in \cite{LaVa} for the quantum case, to viscosities satisfying the BD relation. 
Note that in this paper we use their definition of weak solution that we recall, for reader convenience, in the next section (see Definition \ref{defweak1}). 

Denoting by $r$ the mean value of $\rho$ on $\Omega$ {\it i.e.} $$r=\into \rho$$ the goal of the paper is to show that a  global weak solution $(\rho,u)$  in the sense of \cite{BrVaYu} is exponentially decaying in time to $(r,0)$. To obtain such a result we use the augmented formulation of the system firstly introduced in \cite{BrDeZa} and we use a relative entropy version of the $\kappa$ entropy similar to the one introduced in \cite{BrGiLa}.

    
\vskip0.1cm    
The paper is organized as follows. In Section \ref{AssMainRe} we give the definition of weak solution used in the paper, all the assumptions and we state the main result of exponential decay (see Theorem \ref{th:exponentialdecay}). Section \ref{sec_NSK} is devoted to the proof of this exponential time decay. Finally we give in Appendix the proof of a technical lemma (Lemma \ref{lemtech}) used in Section \ref{sec_NSK}. Note that in all the paper the notation $C$ is used for constants independent of time $t$ but which can be different from line to line. 


\section{Assumptions and main result} \label{AssMainRe}

In this section, we first recall the definition of weak solution used in \cite{BrVaYu}. We complete this definition with some assumptions on the shear and bulk viscosities and finally we state the main result of the paper. 

\subsection{Definition of weak solutions}

Let us first recall the definition of weak solution used in \cite{BrVaYu}. The authors consider shear and bulk viscosities such that:

\begin{itemize}
	\item denoting by ${\mathbb R_+}= [0,\infty)$ and ${\mathbb R}_+^* = (0,\infty)$. 
	\begin{equation}\label{regmu}
\mu \in {\mathcal C}^0({\mathbb R}_+; {\mathbb R}_+) 
       \cap {\mathcal C}^2({\mathbb R}_+^*, {\mathbb R}),
       \end{equation}
	\item there exists two positive numbers $\alpha_1, \alpha_2$ satifying $2/3 <\alpha_1<\alpha_2<4$ such that for any $\rho>0$
	\begin{equation}
	\label{hyp1} 
	0 < \frac{1}{\alpha_2} \rho  \, \mu'(\rho) \le \mu(\rho) \le \frac{1}{\alpha_1} \rho  \, \mu'(\rho), 
	\end{equation}
	\item there exists a constant $C>0$ such that
	\begin{equation}\label{hyp2}
	\Bigl|\frac{\rho \,  \mu''(\rho)}{\mu'(\rho)}\Bigr| \le C <+\infty.
	\end{equation}
\end{itemize}

Note that, thanks to \eqref{hyp1} and \eqref{hyp2}, there exists $\widetilde \nu>0$ such that
\begin{equation}\label{prop1}
\lambda(\rho) + \dfrac{2}{3} \mu(\rho) \ge \widetilde\nu \mu(\rho)
\end{equation}
and 
$$\mu(0)=\lambda(0)=0.$$  
The method, used in \cite{BrVaYu},  to prove global existence of weak solution to the Navier-Stokes-Korteweg  is linked
to the two-hydrodynamic system introduced in \cite{BrDeZa}, the extension of the B\"ohm identity
proved in \cite{BrCoNoVi}, the  generalization of the dissipation inequality used in \cite{Ju} for 
the quantum Navier-Stokes system and established in \cite{JuMa} and 
the renormalized solutions introduced in \cite{LaVa}.  

Here, the addition of a drag term $r_2 \, \rho  \, u$ does not change the result because the $\kappa$-entropy remains uniform and the term $\rho \, u$ is already in the time derivative for compactness. 

\begin{rem} Note the recent paper \cite{AlBr} where the authors have proved a Sobolev 
inequality which could help to enlarge the range of viscosities namely 
$$\mu(\rho) = \rho^\alpha, \qquad \lambda(\rho) = 2(\alpha-1) \rho^\alpha$$
with $\dfrac{2}{3}< \alpha <+\infty$. 
\end{rem}

Let us now recall the definition of the global weak solutions for the Navier-Stokes-Korteweg system
similar to what has been developped in \cite{BrVaYu}
\begin{defi}\label{defweak1}  
    We say that $(\rho,u)$ is a global weak solution to the compressible Navier-Stokes-Korteweg equations as constructed in \cite{BrVaYu} if
 \begin{itemize}
\item The density satisfies $\rho \in  {\mathcal C}^0([0,+\infty); L^\gamma_{\rm weak}(\Omega))$
with $\rho \ge 0 \hbox{ in } (0,+\infty)\times \Omega$, $\rho\vert_{t=0} = \rho^0 \hbox{ a.e. in } \Omega$ with
the viscosity $\mu(\rho) \in {\mathcal C}^0([0,+\infty); L_{\rm weak}^{3/2}(\Omega))$.
\item  The momentum  satisfies $\rho\, u \in {\mathcal C}([0,+\infty); L^{2\gamma/(\gamma+1)}_{\rm weak}(\Omega))$ 
with $\rho u\vert_{t=0} = m^0$.
\item The following energy estimate holds {\it a.e} t $\in [0,+\infty)$
\begin{eqnarray}\label{EnergyEstimates}
&& \int_\Omega
\rho \Bigl(\frac{|w|^2}{2} + [(1-\kappa)\kappa+ \varepsilon] \frac{|v|^2}{2}\Bigr)
      + H(\rho)+ 2 \, r_2 \,  \kappa \,h(\rho)  \, \nonumber\\
&& + \tilde \nu \int_0^t \int_\Omega |{\mathbb T}_\mu|^2  \, 
    + r _2 \int_0^t\int_\Omega \rho |u|^2 \\
 && +  \varepsilon \, C_{\alpha_1,\widetilde\nu} 
      \int_0^t\int_\Omega \Bigl[|\nabla^2 Z(\rho)|^2 + |\nabla Z_1(\rho)|^4
     + |{\mathbb T}_\varepsilon|^2\Bigr]   \,   \nonumber \\
&& + 2\kappa \int_0^t \int_\Omega \frac{\mu'(\rho)p'(\rho)}{\rho} |\nabla\rho|^2 \,
     \nonumber \\
&& \le   \int_\Omega \Bigl[\rho^0 \Bigl(\frac{|w_0|^2}{2} + [(1-\kappa)\kappa+ \varepsilon] \frac{|v_0|^2}{2}
\Bigl)
      + H(\rho^0)+ 2 \,  r_2 \,   \kappa \, h(\rho^0)\Bigr] \,\nonumber
\end{eqnarray}
 for some $C_{\alpha_1,\widetilde\nu}>0$ depending on $\alpha_1$ and $\widetilde \nu$ and
some  $\kappa\in (0,1)$ with 
 \begin{eqnarray}
&& w=u+\kappa \, v \quad  \hbox{ and } \quad v=2\nabla s(\rho) \quad
    \hbox{ where } \quad s'(\rho)= \dfrac{\mu'(\rho)}{\rho}  \label{vw}\\
&& H(\rho)=\rho \int_1^\rho \dfrac{p(s)}{s^2}\,  \quad \hbox{ and }  \quad h(\rho) =\rho \int_1^\rho \dfrac{\mu(s)}{s^2} \,  \label{hH}  \\
&& Z(\rho)= \int_1^\rho \frac{\sqrt{\mu(s)} \mu'(s)}{s} \,  \quad
      \hbox{ and } \qquad
      Z_1(\rho) = \int_1^\rho \frac{\mu'(s)}{(\mu(s))^{1/4} s^{1/2}} \, 
 \end{eqnarray}
and  where ${\mathbb T}_\mu$ is defined through
\begin{eqnarray}\label{compat1}
\sqrt{\mu(\rho)} {\mathbb T}_\mu =
      \nabla \left( \sqrt\rho u \frac{\mu(\rho)}{\sqrt\rho} \right) - \sqrt\rho u \otimes \sqrt{\rho} \nabla s(\rho)
\end{eqnarray}
\begin{eqnarray}\label{compat2}
&& \frac{\lambda(\rho)}{2\mu(\rho)} {\rm Tr}(\sqrt{\mu(\rho)} {\mathbb T}_\mu) {\rm Id} \\
&& \hskip1cm = 
\left[{\rm div} \left( \frac{\lambda(\rho)}{\mu(\rho)} \sqrt\rho u \frac{\mu(\rho)}{\sqrt\rho} \right) 
- \sqrt\rho u \cdot \sqrt\rho \nabla s(\rho) \frac{\rho \mu''(\rho)}{\mu'(\rho)}  \right]{\rm Id}.\nonumber
\end{eqnarray}
The same definitions and compatibility condition are satisfied for  ${\mathbb T}_\varepsilon$,
replacing $u$ by $v=2\nabla s(\rho)$ respectively  in \eqref{compat1}
and \eqref{compat2}.
  \item The following extra estimates hold
 \begin{eqnarray}
 && \|\mu(\rho)\|_{L^\infty(0,+\infty;W^{1,1}(\Omega))} 
     + \|\mu(\rho) u\|_{L^\infty(0,\infty;L^{3/2}(\Omega))\cap
         L^2(0,+\infty;W^{1,1}(\Omega))}<+\infty \nonumber\\
 && \|\partial_t \mu(\rho)\|_{L^\infty(0,+\infty; W^{-1,1}(\Omega))}+
     \|Z(\rho)\|_{L^{1+}(0,+\infty;L^1(\Omega))} < +\infty. \nonumber
 \end{eqnarray}
 \item The density $\rho$ satisfies the mass equation in  ${\mathcal D}'((0,+\infty)\times \Omega)$:
\begin{equation}\label{mass}
\partial_t \rho + {\rm div} (\rho u) = 0. 
\end{equation}
\item The velocity satisfies the momentum in  ${\mathcal D}'((0,+\infty)\times \Omega)$ :
\begin{eqnarray}
&& \partial_t(\rho u)  + {\rm div}(\rho u \otimes u) + \nabla p(\rho)  + r_2 \,  \rho  \, u \nonumber \\
&&\hskip2cm    - 2 {\rm div}(\sqrt{\mu(\rho)} {\mathbb S}_\mu
    + \frac{\lambda(\rho)}{2\mu(\rho)}  {\rm Tr} (\sqrt{\mu(\rho)} {\mathbb S}_\mu) {\rm Id}) 
     \label{momentum}  \\
&&\hskip2cm    - 2 \varepsilon  {\rm div}(\sqrt{\mu(\rho)} {\mathbb S}_\varepsilon
    + \frac{\lambda(\rho)}{2\mu(\rho)}  {\rm Tr} (\sqrt{\mu(\rho)} {\mathbb S}_\varepsilon) {\rm Id}) 
     \nonumber   \\
 && \hskip2cm = 0   \nonumber
\end{eqnarray}
with
$${\mathbb S}_\mu = \frac{({\mathbb T}_\mu + {\mathbb T}_\mu^t)}{2}, \qquad 
{\mathbb S}_\varepsilon = \frac{({\mathbb T}_\varepsilon 
                                            + {\mathbb T}_\varepsilon^t)}{2}.
$$
\item The viscosity satisfies in  ${\mathcal D}'((0,+\infty)\times \Omega)$: 
\begin{eqnarray}
\label{mu}
\partial_t \mu(\rho) + {\rm div}(\mu(\rho) u) 
+  \frac{\lambda(\rho)}{2\mu(\rho)} {\rm Tr}(\sqrt{\mu(\rho)}{\mathbb T}_\mu) = 0.
\end{eqnarray}
\item The global weak solution is obtained through regularized solutions $(\rho_\zeta, w_\zeta, v_\zeta)$
(letting $\zeta$ tend to zero) satisfying
\begin{eqnarray}
&& \dfrac{d\mathcal{E_\zeta}}{dt}(t)  + \varepsilon C_{\widetilde\nu,\alpha_1} \, \displaystyle \int_\Omega |\nabla^2 Z(\rho_\zeta)|^2 \nonumber \\
&&  \hskip3cm 
  + \kappa \int_\Omega\mu'(\rho_\zeta)H''(\rho_\zeta) |\nabla \rho_\zeta|^2 
 + \int_\Omega  r _2 \rho_\zeta |u_\zeta|^2\le 0 \label{dissip}
\end{eqnarray}
where 
\begin{equation}\label{Energy}
{\mathcal E}_\zeta(t) = \int_\Omega \Big[\rho_\zeta (|w_\zeta|^2 
  + (\kappa(1-\kappa)+\varepsilon) |v_\zeta|^2) + H(\rho_\zeta\vert r) 
   +2  \, r_2 \,  \kappa \,   h(\rho_\zeta\vert r)\Bigr]
\end{equation}
where $H(\rho_\zeta\vert r)$ and $h(\rho_\zeta\vert r)$ are  defined by:
\begin{eqnarray}
&& \nonumber H(\rho_\zeta\vert r) = H(\rho_\zeta) - H(r) - H'(r)(\rho_\zeta - r), \\
 &&\nonumber    h(\rho_\zeta\vert r) = h(\rho_\zeta)-h(r) - h'(r)(\rho_\zeta-r).
\end{eqnarray}
\end{itemize}
\end{defi}

\begin{rem}
The presence of the drag term $r_2 \,  \rho \, u$ with $r_2>0$
implies (using the $\kappa$-entropy estimate) the control of $\sup_{t\in (0,+\infty)} \displaystyle{\int_\Omega} h(\rho)(t,x)$ if $\displaystyle{\int_\Omega} h(\rho_0)$ initially. This reads
$$\sup_{t\in (0,+\infty)} \int_\Omega \rho \int_1^\rho \dfrac{\mu(s)}{s^2} \,  \le C <+\infty.$$
Using Hypothesis \eqref{hyp1}, this implies that 
$$\mu(\rho) \in L^\infty(0,+\infty; L^1(\Omega)).$$
This is an important information compared to previous papers such as \cite{LiXi} or \cite{BrVaYu}.
Indeed, to  control $\mu(\rho)$ in $L^\infty(0,+\infty; L^1(\Omega))$ without using drag terms information, we would need as in \cite{LiXi}  to consider a pressure law as
$$p(\rho) = a \, \rho ^\gamma \hbox{ with } \gamma \ge \max (2\alpha_2-1,1)$$
but this condition is restrictive. Note that $\rho^\gamma \in L^\infty(0,+\infty;L^1(\Omega))$ implies 
$\mu(\rho) \in L^\infty(0,+\infty;L^1(\Omega))$ because $\sqrt\rho \in L^\infty(0,+\infty; L^2(\Omega))$
deduced from the mass conservation. Note that the hypothesis  $\gamma \ge 2\alpha_2-1$ is not present in \cite{BrVaYu}. It suffices to assume  $\mu(\rho_0) \in L^1(\Omega)$ and use the equation satisfied by $\mu(\rho)$ to get the result $\mu(\rho)\in L^\infty(0,T;L^1(\Omega))$ for all $T<+\infty$
but this result is not valid for  $T=+\infty$. 
\end{rem}

\subsection{Main result: exponential decay}

In order to be able to get exponential decay, we add the following assumptions on the shear viscosity: 

\begin{itemize}
\item {\it Behaviour of $\mu(\rho)$ for $\rho$ small enough:
\begin{equation}\label{condalphamu}
\lim_{\rho\to0}\frac{\rho\mu'(\rho)}{\mu(\rho)}=\alpha,\qquad \mathrm{with} \ \ \frac{2}{3}<\alpha\le 4,
\end{equation}
assuming
\begin{itemize}
\item  If $\alpha <1$ or if there exists $\rho_0>0$ such that $\mu(\rho)\ge C\rho$ for $\rho<\rho_0$ \\
          that the pressure exponent satisfies $\gamma\ge 1$,
\item If $\alpha \ge 1$  that  the pressure exponent satisfies $\gamma<\alpha.$
\end{itemize}
\vskip0.2cm
\item {\it Behaviour of $\mu(\rho)$ for $\rho$ big enough:} 
\begin{equation}
\label{condbeta2mu}
\lim_{\rho\to\infty}\frac{\rho\mu'(\rho)}{\mu(\rho)}=\beta,\qquad \mathrm{with} \ \ 1< \beta<4
\end{equation}
or letting $M>0$ a given constant
\begin{equation}
\label{condbeta1mu}
\mu(\rho)= \rho \hbox{ for any } \rho \ge M.
\end{equation}
}
\end{itemize}

\begin{rem}
The quantum case satisfies \eqref{condalphamu}--\eqref{condbeta1mu}  without any restriction 
on the pressure law namely $p(\rho)= a \rho^\gamma$  with $\gamma \ge 1$ is elligible.
\end{rem}

As mention before, denoting by $r$ the mean value of $\rho$ on $\Omega$, the goal is to show that a  global weak solution $(\rho,u)$  in the sense of Definition \ref{defweak1} with $p(\rho)= a \, \rho^\gamma$ with $\gamma\ge 1$ and the shear and bulk viscosities satisfying \eqref{BDrelation}--\eqref{regmu} and \eqref{condalphamu}--\eqref{condbeta1mu} is exponentially decaying in time to $(r,0)$. More precisely, the goal is to prove the following result:
\begin{theorem}
\label{th:exponentialdecay} Let $(\rho,u)$ be a global weak solution of the compressible
Navier-Stokes-Korteweg System in the sense of Definition  {\rm \ref{defweak1}} with $p(\rho)= a \, \rho^\gamma$ with $\gamma\ge 1$ and the shear and bulk viscosities satisfying \eqref{BDrelation}--\eqref{regmu} and \eqref{condalphamu}--\eqref{condbeta1mu}. We  define the relative entropy
$$\mathcal{E}(\rho,u,v\vert r,0,0) = {\mathcal E}(t) = \int_\Omega [\rho (|w|^2+(\kappa(1-\kappa)+\varepsilon) |v|^2) 
 + H(\rho\vert r) + 2 \, \kappa \,  r _2\, h(\rho\vert r)],$$
where $v,w$ are given by \eqref{vw} and $H(\rho\vert r)$ and $h(\rho\vert r)$ are defined by
$$H(\rho\vert r) = H(\rho) - H(r) - H'(r) (\rho-r),\quad
    h(\rho\vert r) = h(\rho) - h(r) - h'(r) (\rho-r),$$
with $h$, $H$ given by \eqref{hH}. 

Then, for some constant $C>0$,
$${\mathcal E}(t)  \le {\mathcal E}(0) \exp(-C t).$$

\end{theorem}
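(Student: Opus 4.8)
The plan is to establish a differential inequality of the form $\frac{d}{dt}\mathcal{E}(t) \le -C\,\mathcal{E}(t)$ and conclude by Gronwall's lemma. The starting point is the dissipation inequality \eqref{dissip} available at the level of the regularized solutions $(\rho_\zeta, w_\zeta, v_\zeta)$, which after passing to the limit $\zeta\to 0$ yields
\begin{equation*}
\frac{d}{dt}\mathcal{E}(t) + \varepsilon C_{\widetilde\nu,\alpha_1}\int_\Omega |\nabla^2 Z(\rho)|^2\,dx + \kappa\int_\Omega \mu'(\rho)H''(\rho)|\nabla\rho|^2\,dx + r_3\int_\Omega \rho|u|^2\,dx \le -\,(\text{other nonneg.\ dissipation terms}).
\end{equation*}
So the task reduces to bounding $\mathcal{E}(t)$ from above by the dissipation, i.e.\ proving a Poincar\'e/log-Sobolev-type inequality that controls each of the four pieces of $\mathcal{E}$ — the kinetic-type terms $\int \rho|w|^2$ and $\int \rho|v|^2$, the relative potential energy $\int H(\rho\vert r)$, and the relative drag potential $\int h(\rho\vert r)$ — by a multiple of the dissipation functional. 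First I would handle the $v$-term: since $v = 2\nabla s(\rho)$ with $s'(\rho)=\mu'(\rho)/\rho$, the quantity $\int \rho|v|^2 = 4\int \rho |\nabla s(\rho)|^2$ is (up to the hypotheses \eqref{hyp1}) comparable to $\int |\nabla Z_1(\rho)|^2$ or to $\int \frac{\mu'(\rho)^2}{\rho}|\nabla\rho|^2$-type quantities, and the fourth-order dissipation $\int |\nabla^2 Z(\rho)|^2$ together with a Poincar\'e inequality on $\TD^3$ controls $\int |\nabla Z(\rho)|^2$, which in turn (using the behaviour assumptions \eqref{condalphamu}, \eqref{condbeta2mu}/\eqref{condbeta1mu} on $\mu$ near vacuum and at infinity) dominates the relevant gradient.

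Next I would control the relative potential energies. The key tool is a weighted Poincar\'e / Csisz\'ar–Kullback–Pinsker-type estimate: on a bounded domain with $r$ the mean of $\rho$, one has $\int_\Omega H(\rho\vert r)\,dx \le C\int_\Omega |\nabla \sqrt{H'(\rho)\,}\text{-ish quantity}|^2\,dx$, or more precisely a bound of $\int H(\rho\vert r)$ by the Fisher-information-type term $\int \mu'(\rho)H''(\rho)|\nabla\rho|^2\,dx$ that appears in \eqref{dissip}. This is exactly where the pressure restrictions ($\gamma<\alpha$ when $\alpha\ge1$, $\gamma\ge1$ otherwise) and the viscosity growth restrictions ($1<\beta<4$, or $\mu(\rho)=\rho$ for large $\rho$) enter: they guarantee that $\mu'(\rho)H''(\rho) = \mu'(\rho)p'(\rho)/\rho$ grows fast enough relative to $H(\rho\vert r)$ both near $\rho=0$ and as $\rho\to\infty$ for the functional inequality to hold with a uniform constant, independent of the profile of $\rho$ (this is the log-Sobolev-type mechanism alluded to in the abstract). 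Similarly $\int h(\rho\vert r)$ is controlled by the same Fisher-type term since $h''(\rho)=\mu(\rho)/\rho^2$ and \eqref{hyp1} relates $\mu(\rho)/\rho^2$ to $\mu'(\rho)/\rho$.

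For the genuinely kinetic term $\int \rho|w|^2$ with $w = u + 2\kappa\nabla s(\rho)$, I would use $r_3\int\rho|u|^2$ from the dissipation plus the $v$-control already obtained: $\int\rho|w|^2 \le 2\int\rho|u|^2 + 8\kappa^2\int\rho|\nabla s(\rho)|^2 \le \frac{2}{r_3}\cdot(r_3\int\rho|u|^2) + 2\kappa^2\int\rho|v|^2$, and both summands are dominated by dissipation terms. Assembling these three estimates gives $\mathcal{E}(t)\le C_0\,\mathcal{D}(t)$ where $\mathcal{D}(t)$ is the total dissipation, hence $\frac{d}{dt}\mathcal{E} \le -\mathcal{D} \le -\tfrac{1}{C_0}\mathcal{E}$, and Gronwall finishes the proof with $C=1/C_0$. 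The main obstacle I anticipate is proving the functional inequality $\int_\Omega H(\rho\vert r)\,dx \le C\int_\Omega \mu'(\rho)H''(\rho)|\nabla\rho|^2\,dx$ with a constant $C$ independent of $\rho$: away from a neighbourhood of $\rho=r$ this is a matter of matching growth rates (controlled by \eqref{condalphamu}–\eqref{condbeta1mu}), but near $\rho=r$ it degenerates into a standard Poincar\'e inequality that must be carefully glued to the large/small density regimes, and one must check that the drag term's contribution $h(\rho\vert r)$ does not spoil the balance — it is precisely the presence of $r_3>0$, via the a priori bound $\mu(\rho)\in L^\infty(0,\infty;L^1(\Omega))$ noted in the Remark, that makes the argument work uniformly in time.
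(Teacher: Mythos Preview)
Your overall strategy---bound each piece of $\mathcal E$ by the dissipation and apply Gronwall---is exactly the paper's. But one step in your sketch would fail as written, and the role you assign to the pressure restriction $\gamma<\alpha$ is misplaced.

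You claim that $\int|\nabla Z|^2$ (obtained from $\int|\nabla^2 Z|^2$ via Poincar\'e) ``dominates'' $\int\rho|v|^2$. Since $|\nabla Z(\rho)|^2=\frac{\mu(\rho)}{\rho}\cdot\frac14\rho|v|^2$, this would require $\mu(\rho)/\rho\ge C$, which holds for large $\rho$ (using $\beta\ge 1$) but \emph{fails near vacuum when $\alpha>1$}, since then $\mu(\rho)/\rho\to 0$. The paper's Lemma~\ref{Result_part1} repairs this by also using the Fisher term: writing $\mu'(\rho)H''(\rho)|\nabla\rho|^2=\frac{\rho H''(\rho)}{\mu'(\rho)}\cdot\rho|v|^2$, it proves the pointwise lower bound $\frac{\mu(\rho)}{\rho}+\frac{\rho H''(\rho)}{\mu'(\rho)}\ge C$. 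It is precisely \emph{here}---not in the potential-energy estimate---that the restriction $\gamma<\alpha$ (when $\alpha>1$) enters, forcing $\rho H''(\rho)/\mu'(\rho)\sim\rho^{\gamma-\alpha}$ to stay bounded below as $\rho\to 0$.

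For $\int H(\rho|r)$ and $\int h(\rho|r)$, the paper does not rely on the Fisher term alone as you propose, but combines both dissipation pieces (Lemmas~\ref{lem horrible} and~\ref{lemK}): it splits into large, intermediate, and small density regimes, applies the Poincar\'e-type Lemma~\ref{lemtech} to truncations of $Z(\rho)$, and reserves the genuine log-Sobolev argument for the case $\gamma=1$, $\rho$ large. The $|\nabla^2 Z|^2$ term is the workhorse for the small-$\rho$ and intermediate regions (via control of $|\{\rho<\rho_0\}|$ and $\int|\rho-r|^2{\bf 1}_{\{\rho_0\le\rho\le M\}}$), so you should not expect to discard it. Finally, note that the paper runs the entire Gronwall argument at the regularized level $(\rho_\zeta,w_\zeta,v_\zeta)$, where \eqref{dissip} is a genuine differential inequality, and only then lets $\zeta\to 0$ in the resulting exponential bound; passing to the limit first, as you suggest, would leave you without a usable $d\mathcal E/dt$.
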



\section{Proof of Theorem \ref{th:exponentialdecay}}\label{sec_NSK}
   The goal of this section is to prove Theorem \ref{th:exponentialdecay}. As global weak
solutions are constructed using regular enough solutions, the formal calculations
we will do may be justified using these approximate solutions and their initial conditions. We will then write first the exponential decay for $\mathcal{E}_\zeta$ and then pass to the limit
with respect to $\zeta$ to obtain Theorem \ref{th:exponentialdecay} assuming strong convergence initially. The goal of this section is then to prove the following Theorem.
\begin{theorem} \label{ineqentrel}  
Let the hypothesis on the viscosities and on the pressure law be satisfied namely $p(s)= a \, s^\gamma$ with $\gamma \ge 1$ and the shear and bulk viscosities satisfying \eqref{regmu}--\eqref{BDrelation} and \eqref{condalphamu}--\eqref{condbeta1mu}.
Then there exists a constant $C>0$, uniform in $\zeta$, such that for any  $t>0$, the quantity $\mathcal{E}_\zeta(t)$ defined in \eqref{Energy} verifies
\begin{eqnarray*}
&&   \mathcal{E}_\zeta(t) \leq \mathcal{E}_\zeta(0) \exp(-Ct).
\end{eqnarray*}
\end{theorem}
Note that thanks to \eqref{dissip} of Definition \ref{defweak1}, the regular approximation satisfies the following inequality  
\begin{eqnarray}
\dfrac{d\mathcal{E}_\zeta}{dt}(t) &\leq&  - \varepsilon  \, C_{\widetilde\nu,\alpha_1} \, \displaystyle \int_\Omega |\nabla^2 Z(\rho_\zeta)|^2 - \kappa \int_\Omega\mu'(\rho_\zeta)H''(\rho_\zeta) |\nabla \rho_\zeta|^2 
 - \int_\Omega  r_2 \,   \rho_\zeta \,  |u_\zeta|^2  \nonumber  \\ \label{dissip1}
\end{eqnarray}
It remains then to prove  the following proposition to obtain theorem \ref{ineqentrel} for the regular approximation.
\begin{prop}\label{prop:estregsol}
There exists a constant $C>0$ such that for every $t>0$
$$- \varepsilon  \, C_{\widetilde\nu,\alpha_1} \, \displaystyle \int_\Omega |\nabla^2 Z(\rho_\zeta)|^2 - \kappa \int_\Omega\mu'(\rho_\zeta)H''(\rho_\zeta) |\nabla \rho_\zeta|^2 
 - \int_\Omega  r _2 \,  \rho_\zeta \,  |u_\zeta|^2  \leq- C \mathcal{E_\zeta}(t).$$
\end{prop}

The proof of this proposition is divided in three steps corresponding to the three following subsections. To simplify the presentation, we omit the subscript $\zeta$ but all the calculations have to be thinking for the approximate solutions.

\subsection{First step of proof of proposition \ref{prop:estregsol}}

First of all, let us prove this following lemma.
\begin{lem}\label{Result_part1}
There exists a constant $C>0$ such that for every $t>0$
\begin{eqnarray}
- \displaystyle \int_\Omega |\nabla^2 Z(\rho)|^2 -\int_\Omega\mu'(\rho)H''(\rho) |\nabla \rho|^2 
- \int_\Omega \,  \rho \,  |u|^2  \label{ineq_part1}\\
\leq- C\int_\Omega  \dfrac{\rho}{2} \left(|w|^2 + (\kappa (1-\kappa)+\varepsilon) |v|^2\right) \nonumber
\end{eqnarray}
\end{lem}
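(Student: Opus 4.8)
The plan is to estimate each of the three nonpositive terms on the left of \eqref{ineq_part1} from below by the matching piece of the right-hand side, after first re-expressing the kinetic integrand $\rho(|w|^2+(\kappa(1-\kappa)+\varepsilon)|v|^2)$ through $u$ and $\nabla\rho$. Since $w=u+\kappa v$ by \eqref{vw}, one has $\rho|w|^2\le 2\rho|u|^2+2\kappa^2\rho|v|^2$, and $2\kappa^2\rho|v|^2\le\frac{2\kappa}{1-\kappa}(\kappa(1-\kappa)+\varepsilon)\rho|v|^2$ because $\varepsilon>0$; hence
$$\into\frac{\rho}{2}\bigl(|w|^2+(\kappa(1-\kappa)+\varepsilon)|v|^2\bigr)\ \le\ \into\rho|u|^2+\frac{1}{2}\Bigl(1+\frac{2\kappa}{1-\kappa}\Bigr)\into(\kappa(1-\kappa)+\varepsilon)\rho|v|^2 .$$
The term $\into\rho|u|^2$ equals $\frac1{r_3}\into r_3\rho|u|^2$, so the whole problem reduces to absorbing $\into\rho|v|^2$ into the two remaining dissipative terms $\varepsilon C_{\widetilde\nu,\alpha_1}\into|\nabla^2 Z(\rho)|^2$ and $\kappa\into\mu'(\rho)H''(\rho)|\nabla\rho|^2$.

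To do this I rewrite all three densities. From $v=\frac{2\mu'(\rho)}{\rho}\nabla\rho$ we get $\rho|v|^2=4K(\rho)|\nabla\rho|^2$ with $K(\rho)=(\mu'(\rho))^2/\rho$; from $\nabla Z(\rho)=\frac{\sqrt{\mu(\rho)}\,\mu'(\rho)}{\rho}\nabla\rho$ we get $|\nabla Z(\rho)|^2=\frac{\mu(\rho)}{\rho}K(\rho)|\nabla\rho|^2$; and since $H''(\rho)=p'(\rho)/\rho$,
$$\mu'(\rho)H''(\rho)|\nabla\rho|^2=\frac{\mu'(\rho)p'(\rho)}{\rho}|\nabla\rho|^2=K(\rho)\,\frac{p'(\rho)}{\mu'(\rho)}\,|\nabla\rho|^2 .$$
Thus it suffices to prove the pointwise bound
$$1\ \le\ C_0\Bigl(\frac{\mu(\rho)}{\rho}+\frac{p'(\rho)}{\mu'(\rho)}\Bigr)\qquad\hbox{for all }\rho>0$$
for some $C_0>0$: multiplying by $K(\rho)|\nabla\rho|^2$ and integrating then gives $\frac14\into\rho|v|^2\le C_0\bigl(\into|\nabla Z(\rho)|^2+\into\frac{\mu'(\rho)p'(\rho)}{\rho}|\nabla\rho|^2\bigr)$. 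Finally $\nabla Z(\rho)$ is a periodic gradient, hence has zero average on $\Omega=\TD^3$, so the Poincaré–Wirtinger inequality applied to each of its components yields $\into|\nabla Z(\rho)|^2\le C_P\into|\nabla^2 Z(\rho)|^2$. Collecting the pieces and reabsorbing the fixed constants $\varepsilon,\kappa,r_3,C_{\widetilde\nu,\alpha_1},C_P,C_0$ into a single $C>0$ gives \eqref{ineq_part1}.

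The core of the argument, and the only delicate point, is the pointwise bound, which is precisely where \eqref{condalphamu}--\eqref{condbeta2mu} enter. Integrating the relation $\rho\mu'(\rho)/\mu(\rho)\to\alpha$ as $\rho\to0$ turns it into two-sided power bounds $c\rho^{\alpha+\eta}\le\mu(\rho)\le C\rho^{\alpha-\eta}$ for $\rho$ small, with $\eta>0$ arbitrarily small. So near $\rho=0$: if $\alpha<1$, or if $\mu(\rho)\ge C\rho$ for $\rho$ small by assumption, then $\mu(\rho)/\rho$ is bounded below; if $\alpha\ge1$, then $\mu'(\rho)=\frac{\rho\mu'(\rho)}{\mu(\rho)}\frac{\mu(\rho)}{\rho}\le C\rho^{\alpha-\eta-1}$, so $p'(\rho)/\mu'(\rho)\ge c\rho^{\gamma-\alpha+\eta}$, which is bounded below since the standing hypothesis forces $\gamma<\alpha$ (take $\eta<\alpha-\gamma$). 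Likewise, as $\rho\to\infty$: either $\mu(\rho)=\rho$ for $\rho\ge M$, so $\mu(\rho)/\rho=1$; or $\rho\mu'(\rho)/\mu(\rho)\to\beta>1$, giving $\mu(\rho)\ge c\rho^{\beta-\eta}$ with $\beta-\eta>1$, hence $\mu(\rho)/\rho\to\infty$. On every compact subinterval of $(0,\infty)$ the continuous positive functions $\mu$ and $\mu'$ (positivity of $\mu'$ by \eqref{hyp1}) make $\mu(\rho)/\rho$ bounded below. Patching these three regimes gives $\frac{\mu(\rho)}{\rho}+\frac{p'(\rho)}{\mu'(\rho)}\ge c_0>0$ on $(0,\infty)$, i.e. the claim. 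As stipulated, all these manipulations are carried out on the regularized densities $\rho_\zeta$, for which $Z(\rho_\zeta)\in H^2(\Omega)$ and the computations are licit; the constant $C$ does not depend on $\zeta$, so it survives the limit $\zeta\to0$. I expect this case analysis — making the asymptotics $\rho\mu'/\mu\to\alpha,\beta$ quantitative and checking that in each regime at least one of the two nonnegative terms stays bounded away from $0$ — to be the only real obstacle; the rest is algebra, the one-line Poincaré estimate, and absorbing constants (legitimate since $\varepsilon,\kappa,r_3$ are fixed).
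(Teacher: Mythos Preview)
Your proof is correct and follows essentially the same approach as the paper. Both arguments hinge on (i) the Poincar\'e inequality $\into|\nabla Z|^2\le C\into|\nabla^2 Z|^2$ (which you justify explicitly via the zero mean of $\nabla Z$ on $\TD^3$), (ii) the pointwise lower bound $\frac{\mu(\rho)}{\rho}+\frac{p'(\rho)}{\mu'(\rho)}\ge c_0>0$ obtained by the same case analysis on \eqref{condalphamu}--\eqref{condbeta2mu}, and (iii) the elementary relation $|w|^2\le 2|u|^2+2\kappa^2|v|^2$; the only cosmetic difference is that you open with step (iii) whereas the paper closes with it.
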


\begin{proof}
From Poincaré's inequality, we find
\begin{eqnarray*}
&& \int_\Omega |\nabla^2 Z(\rho)|^2\geq C \int_\Omega |\nabla Z(\rho)|^2 \geq C\int_\Omega \frac{\mu(\rho)}{\rho}\rho |v|^2. 
\end{eqnarray*}
Moreover,
$$
\mop H''(\rho)|\nabla \rho|^2=\dfrac{1}{4} \frac{\rho}{\mu'(\rho)}H''(\rho)\rho|v|^2,
$$
hence there exists $C>0$ such that
\begin{eqnarray*}
&&- \displaystyle \int_\Omega |\nabla^2 Z(\rho)|^2 -  \int_\Omega\mu'(\rho)H''(\rho) |\nabla \rho|^2  \leq -C \int_\Omega \left( \frac{\mu(\rho)}{\rho}+\frac{\rho H''(\rho)}{\mu'(\rho)}\right )\rho |v|^2.
\end{eqnarray*}
If now we  prove that there exists a constant $C>0$ such that for any $\rho\geq 0$
\begin{equation}
\label{eq:ajouta}
\frac{\mu(\rho)}{\rho}+\frac{\rho H''(\rho)}{\mu'(\rho)}\geq C,
\end{equation} 
then
\begin{equation}
\label{ineq_proof_part1_1}
- \displaystyle \int_\Omega |\nabla^2 Z(\rho)|^2 -  \int_\Omega\mu'(\rho)H''(\rho) |\nabla \rho|^2\leq -C \int_\Omega \rho |v|^2.
\end{equation}


\noindent{\it Proof of \eqref{eq:ajouta}.}
To do so let us consider large and small density values, the inequality being true for intermediate values. Note that for large density values or small density values with $\alpha<1$, we only use the term $\mu(\rho)/\rho$ to obtain the inequality \eqref{eq:ajouta}. But for small density values with $\alpha \geq 1$, we have to use the term $\rho H''(\rho)/\mu'(\rho)$ to obtain \eqref{eq:ajouta}.

\smallskip

\noindent{\bf First part (large density values).} 
From \eqref{condbeta2mu} or \eqref{condbeta1mu},  for  $M>0$ big enough and  $\rho>M$, we have $\rho \, \mop\geq \mo$. Integrating the ODE gives   $\mo \geq C\rho$ for $\rho>M$.  

\vskip0.2cm
\noindent{\bf Second part (small density values).} 
We have to consider two cases to control the small values of $\rho$. If  $\alpha < 1$ in \eqref{condalphamu}, then, for  $\rho_0>0$ small enough and  $\rho<\rho_0$, we have $\rho \, \mop\geq \mo$. Integrating the ODE gives   $\mo\geq C\rho$ for all values of $\rho\geq0$. 

Now, we consider the case  $\alpha \geq 1$ in \eqref{condalphamu}. For any $\eta>0$ there exists $\rho_\eta>0$ such that for any $\rho<\rho_\eta$ we have 
$$
\rho\mop\geq(\alpha-\eta)\mo.
$$
Integrating backward in $\rho$ the ODE gives that for $\rho<\rho_\eta$:
$$
\mo\leq C\rho^{\alpha-\eta}.
$$
Using \eqref{condalphamu} again gives that 
for $\rho<\rho_\eta$:
$$
\frac{\rho H''(\rho)}{\mop}\geq C\rho^{\gamma-\alpha+\eta}.
$$
If $\gamma<\alpha$, taking $\eta>0$ small enough we have:
$$
\frac{\rho H''(\rho)}{\mop}\geq C,\qquad \mathrm{for} \  \rho<\rho_\eta.
$$ 
This gives estimate \eqref{eq:ajouta} and then \eqref{ineq_proof_part1_1}.
\cqfd

Note now that  we have 
$$
|w|^2=|u+\kappa v|^2\leq 2(|u|^2+\kappa^2 |v|^2).
$$
So 
$$
-|u|^2\leq \kappa^2 |v|^2-\frac{|w|^2}{2}.
$$
Hence, using \eqref{ineq_proof_part1_1},
\begin{eqnarray}\nonumber
&&- \displaystyle \int_\Omega |\nabla^2 Z(\rho)|^2 -\int_\Omega\mu'(\rho)H''(\rho) |\nabla \rho|^2 - \int_\Omega \rho |u|^2\\ 
\nonumber
&&\qquad \leq -C \,  (\int_\Omega \rho |v|^2+ \int_\Omega \rho |u|^2)\\
\nonumber
&& \qquad \leq - C\int_\Omega  \rho \,  \left(|w|^2 + (\kappa (1-\kappa)+\varepsilon) |v|^2\right),  \label{rhov}
\end{eqnarray}
which concludes the proof of Lemma \ref{Result_part1}.
\end{proof}

\begin{rem}
Note that we can obtain the result of Lemma \ref{Result_part1} without the capillarity term and then the exponential time decay but we then need conditions on the exponent $\gamma$ of the pressure. In other terms, the use of the capillarity term allow to obtain the exponential decay for all exponent $\gamma$ of the pressure law.   
\end{rem}

\subsection{Second step of proof of proposition \ref{prop:estregsol}}
The goal now is to prove the following lemma. 
\begin{lem} \label{propnew}
There exists a constant $C>0$ such that
\begin{equation}
\label{eq_modulatedHs}
\ -  \into |\nabla^2 Z(\rho) |^2 - \into H''(\rho)\mu'(\rho)|\nabla\rho|^2 \leq -C \, \left(  \into    h(\rho|r)+\into H(\rho|r) \right).
\end{equation}
\end{lem}
To prove this lemma, we use two results: lemmas \ref{lem horrible} and \ref{lemK}. Note that lemma \ref{lem horrible} will be used to prove lemma \ref{lemK} which gives lemma \ref{propnew}.

\begin{lem}\label{lem horrible}
For any constant $\eta>0$ small enough,  there exist $0<\rho_0<r<M$ and  $C>0$ such that 
\begin{eqnarray}\label{first truc}
 \into|\nabla^2 Z|^2&\geq&C\into|\rho-r|^2 \mathbf{1}_{\{  \rho_0\leq \rho\leq M \}} \\
 && +C\into \mathbf{1}_{\{  \rho<\rho_0 \}}+C\into \rho^{3(\beta-\eta)-2} \mathbf{1}_{\{  \rho\geq M \}}.
  \nonumber
\end{eqnarray}
Moreover, if $\beta\ge 1$  then
\begin{equation}\label{second truc}
\into H''(\rho)\mop|\nabla\rho|^2\geq C \into H(\rho)  \mathbf{1}_{\{  \rho\geq M \}}.
\end{equation}
\end{lem}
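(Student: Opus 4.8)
The whole argument rests on one pointwise bound for $Z(\rho)$. Set $A:=\big(\into|\nabla^2 Z(\rho)|^2\big)^{1/2}$ and write $\langle f\rangle:=\into f$ for the mean over $\Omega$ (recall $|\Omega|=1$). On $\mathbb{T}^3$ the gradient of a periodic function has zero mean, so Poincar\'e--Wirtinger together with the Sobolev embedding $H^2(\mathbb{T}^3)\hookrightarrow L^\infty$ give
\[
\|\nabla Z(\rho)\|_{L^2}\le C A,\qquad \|Z(\rho)-\langle Z(\rho)\rangle\|_{L^2}+\|Z(\rho)-\langle Z(\rho)\rangle\|_{L^\infty}\le C A .
\]
Since $\mu>0$ and $\mu'>0$ on $(0,+\infty)$ (by \eqref{hyp1}), the map $\rho\mapsto Z(\rho)$ is strictly increasing, hence $Z(r)=Z(\langle\rho\rangle)$ lies between $\min_\Omega Z(\rho)$ and $\max_\Omega Z(\rho)$; by the $L^\infty$ bound these two extrema are within $CA$ of $\langle Z(\rho)\rangle$, so $|\langle Z(\rho)\rangle-Z(r)|\le CA$ and therefore
\[
\|Z(\rho)-Z(r)\|_{L^2(\Omega)}+\|Z(\rho)-Z(r)\|_{L^\infty(\Omega)}\le C A .
\]
This inequality — which uses no convexity of $Z$ — is the engine behind \eqref{first truc}.

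\textbf{The three terms of \eqref{first truc}.} Fix $\eta>0$ small, pick any $\rho_0\in(0,r)$, set $\delta_0:=Z(r)-Z(\rho_0)>0$, and pick $M>\max(r,1)$ (enlarged below). On $\{\rho<\rho_0\}$ one has $|Z(\rho)-Z(r)|\ge\delta_0$, so $\delta_0^2\into\mathbf{1}_{\{\rho<\rho_0\}}\le\|Z(\rho)-Z(r)\|_{L^2}^2\le CA^2$. For large densities, \eqref{condbeta2mu} gives (after fixing $\eta$) $\mu(\rho)\ge c\rho^{\beta-\eta}$, $\mu'(\rho)\ge c\rho^{\beta-\eta-1}$ beyond a threshold, hence $Z'(\rho)=\sqrt{\mu(\rho)}\,\mu'(\rho)/\rho\ge c\rho^{3(\beta-\eta)/2-2}$ and, integrating, $Z(\rho)^2\ge c\,\rho^{3(\beta-\eta)-2}$ for $\rho$ beyond a (possibly larger) threshold; under \eqref{condbeta1mu} one simply has $\mu(\rho)=\rho$, $Z(\rho)^2\simeq 4\rho$, and one takes $\beta=1$, $\eta=0$. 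Enlarging $M$ so that this holds for $\rho\ge M$ and moreover $Z(M)\ge 2Z(r)$, on $\{\rho\ge M\}$ we get $Z(\rho)-Z(r)\ge\tfrac12 Z(\rho)$, hence $\rho^{3(\beta-\eta)-2}\le c^{-1}Z(\rho)^2\le 4c^{-1}|Z(\rho)-Z(r)|^2$ and $\into\rho^{3(\beta-\eta)-2}\mathbf{1}_{\{\rho\ge M\}}\le CA^2$. Finally, for the intermediate term put $\tilde\rho:=\min(\max(\rho,\rho_0),M)$, so $\tilde\rho=\rho$ on $\{\rho_0\le\rho\le M\}$ and $|\nabla\tilde\rho|^2=|\nabla\rho|^2\mathbf{1}_{\{\rho_0<\rho<M\}}$; since $Z'$ is continuous and $\ge c_1>0$ on the compact $[\rho_0,M]$, we get $\|\nabla\tilde\rho\|_{L^2}^2\le c_1^{-2}\|\nabla Z(\rho)\|_{L^2}^2\le CA^2$, so Poincar\'e--Wirtinger gives $\|\tilde\rho-\langle\tilde\rho\rangle\|_{L^2}^2\le CA^2$. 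Since $\langle\tilde\rho\rangle-r=\int_{\{\rho<\rho_0\}}(\rho_0-\rho)+\int_{\{\rho>M\}}(M-\rho)$ we have $|\langle\tilde\rho\rangle-r|\le\rho_0\into\mathbf{1}_{\{\rho<\rho_0\}}+\int_{\{\rho>M\}}\rho$; the first summand is already controlled, and for the second, using $\rho\le\rho^{3(\beta-\eta)-2}$ on $\{\rho\ge M\}$ (valid as $\beta-\eta\ge 1$), $\big(\int_{\{\rho>M\}}\rho\big)^2\le r\int_{\{\rho>M\}}\rho\le r\into\rho^{3(\beta-\eta)-2}\mathbf{1}_{\{\rho\ge M\}}\le CA^2$. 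Hence $\int_{\{\rho_0\le\rho\le M\}}|\rho-r|^2\le 2\|\tilde\rho-\langle\tilde\rho\rangle\|_{L^2}^2+2|\langle\tilde\rho\rangle-r|^2\le CA^2$, and summing the three estimates yields \eqref{first truc}.

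\textbf{Proof of \eqref{second truc}.} Because $\mathcal E(t)\le\mathcal E(0)$ bounds $\into H(\rho\vert r)=\into H(\rho)-H(r)$ uniformly in $t$ and $H(\rho)\simeq\rho^\gamma$, the number $C_1:=\sup_{t\ge0}\into\rho^\gamma$ is finite. Put $F(\rho):=\int_0^\rho\sqrt{H''(s)\mu'(s)}\,ds$ (finite near $0$ since $\gamma+\alpha>1$), fix $M^\sharp$ with $(M^\sharp)^\gamma\ge 4C_1$, and set $G(\rho):=(F(\rho)-F(M^\sharp))_+$, so that $|\nabla G(\rho)|^2=H''(\rho)\mu'(\rho)|\nabla\rho|^2\mathbf{1}_{\{\rho>M^\sharp\}}$ and $G(\rho)=0$ on $\{\rho\le M^\sharp\}$, a set of measure $\ge 3/4$ by Chebyshev. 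The Poincar\'e inequality for functions vanishing on a set of measure bounded below then gives $\|G(\rho)\|_{L^2}^2\le C\|\nabla G(\rho)\|_{L^2}^2\le C\into H''(\rho)\mu'(\rho)|\nabla\rho|^2$. Using $\int^{+\infty}\sqrt{H''\mu'}=+\infty$, enlarge $M\ge M^\sharp$ so that $F(M)\ge 2F(M^\sharp)$; then on $\{\rho\ge M\}$, $G(\rho)=F(\rho)-F(M^\sharp)\ge\tfrac12 F(\rho)$, while the asymptotics ($F(\rho)^2\simeq\rho^{\gamma+\beta-1}$ under \eqref{condbeta2mu}, $F(\rho)^2\simeq\rho^\gamma$ under \eqref{condbeta1mu}) give $H(\rho)\le C F(\rho)^2$ for $\rho$ large, precisely because $\gamma+\beta-1\ge\gamma$ when $\beta\ge1$. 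Hence, after one more enlargement of $M$, $\into H(\rho)\mathbf{1}_{\{\rho\ge M\}}\le C\into F(\rho)^2\mathbf{1}_{\{\rho\ge M\}}\le 4C\into G(\rho)^2\le C\into H''(\rho)\mu'(\rho)|\nabla\rho|^2$, which is \eqref{second truc}.

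\textbf{Main obstacle.} The crux is the first step's bound $\|Z(\rho)-Z(r)\|_{L^\infty}\le C\|\nabla^2 Z(\rho)\|_{L^2}$: one must observe that monotonicity of $Z$ together with $\into\rho=r$ forces $Z(r)$ between $\min_\Omega Z(\rho)$ and $\max_\Omega Z(\rho)$, which pins $\langle Z(\rho)\rangle$ to $Z(r)$ with no hypothesis on the sign of $Z''$. The remaining difficulty is bookkeeping — matching the growth rates $Z(\rho)^2\simeq\rho^{3\beta-2}$ and $F(\rho)^2\simeq\rho^{\gamma+\beta-1}$ to the exact exponents in the statement while absorbing the $\eta$-loss coming from \eqref{condbeta2mu}, and disposing of the ``middle'' set $\{\rho\ge M^\sharp\}$ of small but positive measure via the Chebyshev choice of $M^\sharp$ (using the uniform $L^1$ bound on $\rho^\gamma$) together with the one-sided truncation $G=(F-F(M^\sharp))_+$, for which Poincar\'e applies without removing a mean. (The borderline case $\gamma=1$ uses $H(\rho)=a\rho\log\rho$ and only requires cosmetic changes.)
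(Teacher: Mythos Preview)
Your argument for \eqref{first truc} is correct and takes a genuinely different route from the paper's. The paper never uses the embedding $H^2(\mathbb T^3)\hookrightarrow L^\infty$; instead it works only at the $L^2$ level, applying Lemma~\ref{lemtech} to the truncations $(Z(\rho)-Z(2r))_+$ and $((r/2)^\sigma-\rho^\sigma)_+$ (after controlling their zero sets via Chebyshev and a contradiction argument on $|\{\rho>r/2\}|$). Your $L^\infty$ bound $\|Z(\rho)-Z(r)\|_{L^\infty}\le C\|\nabla^2 Z(\rho)\|_{L^2}$, obtained by pinning $Z(r)$ between $\min Z(\rho)$ and $\max Z(\rho)$ via monotonicity, shortcuts all of this: the small-density term is immediate, and no dichotomy ``either $A^2\ge\delta$ or \dots'' is needed. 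The price is that your approach is specific to dimension $d\le 3$ (where $H^2\hookrightarrow L^\infty$), whereas the paper's $L^2$-only argument is more robust. For the intermediate-density term both proofs are essentially the same truncation-plus-Poincar\'e computation.

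For \eqref{second truc} your argument via $F(\rho)=\int_0^\rho\sqrt{H''\mu'}$ and the one-sided truncation $G=(F-F(M^\sharp))_+$ is correct whenever $\beta>1$ (i.e.\ under \eqref{condbeta2mu}), and the invocation of the energy bound to fix $M^\sharp$ is legitimate though unnecessary --- mass conservation $\into\rho=r$ alone already yields $|\{\rho>4r\}|\le 1/4$. However, the dismissal of $\gamma=1$ as ``cosmetic'' hides a real gap in the case \eqref{condbeta1mu} (effectively $\beta=1$): there $F(\rho)^2\simeq\rho$ while $H(\rho)=a\rho\log\rho$, so the required inequality $H(\rho)\le C F(\rho)^2$ fails for large $\rho$ and your Poincar\'e step cannot close. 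This is precisely the case in which the paper resorts to a log-Sobolev type estimate: it applies Jensen to $\log(\cdot)$ with respect to the probability measure $d\mu=\frac{1+(f-\sqrt{2r})_+^2}{m}\,dx$ (with $f=\sqrt\rho$) to extract the logarithmic factor, and then compares $|\nabla\sqrt\rho|^2$ with $H''(\rho)\mu'(\rho)|\nabla\rho|^2$ via $\sup_{\rho\ge 2r}\frac{1/\rho}{H''(\rho)\mu'(\rho)}<\infty$. If you want your proof to cover the full range $\beta\ge 1$ stated in the lemma, you need this extra ingredient in the borderline case.
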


\begin{proof} 
The proof is divided into three parts: The first deals with the large values of $\rho$, the second one with the small values of $\rho$, and finally the third with 
the intermediate values of $\rho$.

\vskip0.2cm

\noindent{\bf First part (large density values).} 
In this part we consider the large values of $\rho$. We first focus on the proof of \eqref{first truc} and then, using a generalization of the classical  log-Sobolev inequality we prove \eqref{second truc}.

\vskip0.2cm
\noindent{\underline{ Proof of \eqref{first truc}}}:
Since the function $\rho\mapsto Z(\rho)$ is an increasing function, we have $|\{Z(\rho)\geq Z(2r)\}|=|\{\rho\geq 2r\}|$. By the Tchebychev's inequality, we get 
\begin{equation}\label{measrhogrand}
|\{Z(\rho)\geq Z(2r)\}|=|\{\rho\geq 2r\}|\leq \frac{1}{2r}\into \rho= \frac{1}{2}.
\end{equation}
Since $|\Omega|=1$, using \eqref{measrhogrand} we have 
$$
|\{Z(\rho)< Z(2r)\}|\geq \frac{1}{2}.
$$
Using Lemma \ref{lemtech} given in Appendix on $(Z(\rho)-Z(2r))_+$ we get 
\begin{equation}
\label{eq:ajoutb}
\into(Z(\rho)-Z(2r))_+^2\leq  C\into |\nabla Z|^2.
\end{equation}
Thanks to assumptions \eqref{condbeta2mu} or \eqref{condbeta1mu}, the function $\rho \mapsto Z(\rho)$ converges to infinity when $\rho$ goes to infinity. So there exists $M>0$ such that for any $\rho\geq M$
\begin{equation}
\label{eq:ajoutc}
Z(\rho)^2\leq 2(Z(\rho)-Z(2r))_+^2.
\end{equation}
Moreover for any $\eta>0$, there exists a possibly bigger $M>0$ such that we have for any $\rho>M$:
$$
Z(\rho)^2\geq C\rho^{3(\beta-\eta)-2}.
$$
Therefore with \eqref{eq:ajoutb} and \eqref{eq:ajoutc}, we have:
$$
\into \rho^{3(\beta-\eta)-2}\mathbf{1}_{\{\rho\geq M\}}\leq C\into |Z(\rho)|^2\mathbf{1}_{\{\rho\geq M\}}\leq C\into |\nabla Z|^2.
$$

\vskip0.2cm
\noindent{\underline{Proof of \eqref{second truc}}}:
If $\gamma>1$, working with the function $J(\rho)$ instead of $Z(\rho)$ where $J(\rho)$ is defined such that 
$$
J'(\rho)=\sqrt{H''(\rho)\mu'(\rho)},
$$
with similar arguments, we  obtain \eqref{second truc}. 

If $\gamma=1$, then $H(\rho)=\rho\ln\rho$, and we need a generalization of the classical log-Sobolev inequality.
Let us denote $f=\sqrt{\rho}$. As in \eqref{measrhogrand}, we have 
\begin{equation}
|\{f\leq \al \}|\geq |\{\rho\leq 2r\}|\geq \frac{1}{2}.
\end{equation}
So, from Sobolev's injections and Lemma \ref{lemtech}, we obtain
\begin{eqnarray*}
&&\|1+(f-\al)^2_+\|_{L^3}\leq 1+\|(f-\al)^2_+\|_{L^3}\leq 1+\|(f-\al)_+\|^2_{L^6}\\
&&\qquad \leq 1+C\left(\|\nabla(f-\al)_+\|^2_{L^2}+\|(f-\al)_+\|^2_{L^2} \right)\\
&&\qquad \leq 1+C\|\nabla(f-\al)_+\|^2_{L^2}.
\end{eqnarray*}
Denoting by
$$
m=\into  1+(f-\al)^2_+\, \qquad ds=\frac{1+(f-\al)^2_+}{m},
$$
(note that $s$ is a positive measure  of mass 1), the logarithmic function being increasing and concave, using Jensen's inequality, we have
\begin{eqnarray}
\nonumber
&&C\|\nabla (f-\al)_+\|^2_{L^2}\geq \ln (1+C\|\nabla(f-\al)_+\|^2_{L^2})\\
\nonumber
&&\qquad\geq\frac{1}{3}\ln\left[ \into \left( 1+(f-\al)_+^2\right)^3\,dx\right]
=\frac{1}{3}\ln\left[  \into m\left( 1+(f-\al)_+^2\right)^2 \,ds\right] \\
&&\quad\qquad \geq \into \frac{1}{3}\ln \left( m  \left(1+(f-\al)_+^2\right)^2\right)\, ds \label{Jensen}\\
&&\quad\qquad=  \frac{2}{3}\into \ln\left(\sqrt{m}  (1+(f-\al)_+^2)\right)\frac{1+(f-\al)^2_+}{m} \, dx \nonumber \\
&&\quad\qquad \geq \frac{2}{3(1+r)}\into \ln\left( (1+(f-\al)_+^2)\right) (1+(f-\al)^2_+) \,dx, \nonumber
\end{eqnarray}
where we have used that  $1\leq m\leq 1+r$. 

Defining $M=\sup(8r, 16)$, for $f=\sqrt{\rho}\geq \sqrt{M}$, we have both
\begin{eqnarray*}
&&1+(f-\al)_+^2\geq \frac{\rho}{4} \qquad \hbox{and} \qquad \ln (1+(f-\al)_+^2)\geq \frac{1}{2}\ln \rho.
\end{eqnarray*}
Therefore, \eqref{Jensen} gives the following control
$$
C\into H(\rho) \mathbf{1}_{\{\rho>M\}}=C\into \rho\ln \rho \mathbf{1}_{\{\rho>M\}} \leq C\|\nabla (f-\al)_+\|^2_{L^2}.
$$
From Properties \eqref{condbeta2mu} and \eqref{condbeta1mu} 
$$
\sup_{\rho\geq 2r}\frac{1/\rho}{H''(\rho)\mop}\leq C,
$$
and so 
$$
C\into H(\rho) \mathbf{1}_{\{\rho>M\}}\leq C\|\nabla (f-\al)_+\|^2_{L^2}\leq C \|\nabla\sqrt{\rho} \mathbf{1}_{\{\rho\geq 2r\}}\|^2_{L^2}\leq C\into H''(\rho)\mop|\nabla\rho|^2
$$
which gives \eqref{second truc}.

\vskip0.2cm 
\noindent{\bf Second part (small density values).} In this part we consider the small values of $\rho$.
From \eqref{condalphamu}, there exists $\sigma>0$ such that 
$$
Z'(\rho)\geq C\rho^{\sigma-1},
$$
and so
\begin{equation}
\label{ineq_gradrhosigma}
\into |\nabla \rho^\sigma|^2\leq C \into|\nabla Z|^2.
\end{equation}
Let us fix $\rho_0=\dfrac{r}{4}$ and $\delta=\inf \left( \dfrac{r}{16}, \dfrac{1}{2} \right)$. Note that, if $C\displaystyle \into|\nabla^2 Z|^2\geq \delta$, 
$$
\into {\bf 1}_{\{\rho\leq \rho_0\}}\leq 1\leq \frac{C}{\delta} \into|\nabla^2 Z|^2, 
$$
which immediately gives \eqref{first truc} for $\rho<\rho_0$.
Then, without loss of generality, we can assume that 
$$
C\into|\nabla^2 Z|^2\leq \delta,
$$
such that the result {\color{blue}of the first part of the proof} implies
$$
\into {\bf 1}_{\{\rho\geq M\}}(\rho-r)_+\leq  M^{-3(\beta-\eta)+3}\into\rho^{3(\beta-\eta)-2}  {\bf 1}_{\{\rho\geq M\}} \leq C\into|\nabla^2 Z|^2\leq\delta.
$$
We claim that 
\begin{equation}\label{claim}
|\{\rho>r/2\}|\geq \frac{\delta}{M+1}.
\end{equation}
To prove this claim let us show that if it is not the case we obtain a contradiction. 
Assuming
$$|\{\rho>r/2\}| < \frac{\delta}{M+1},$$
 we have
$$
M|\{\rho\geq r\}|\leq M|\{\rho > r/2\}| \leq \delta,
$$
and so
$$
\into(\rho-r)_+\leq \into {\bf 1}_{\{\rho\geq M\}}(\rho-r)_++M|\{\rho\geq r\}|\leq 2\delta.
$$
Since $\displaystyle r=\into\rho$, we have $\displaystyle \into(\rho-r)_+=\into(r-\rho)_+$.
Using Tchebychev's inequality, we get 
\begin{eqnarray*}
&&\qquad |\{\rho\leq r/2\}|=|\{(r-\rho)_+\geq r/2\}|\leq \frac{2}{r}\into(r-\rho)_+\leq \frac{4\delta}{r}\leq \dfrac{1}{4}.
\end{eqnarray*}
Therefore 
$$
|\{\rho>r/2\}|\geq 1-1/4=\dfrac{3}{4},
$$
which is a contradiction, since $\dfrac{\delta}{M+1} \leq \delta\leq \dfrac{1}{2}$. Therefore the claim \eqref{claim} is proved.

\vskip0.2cm
Let us now rewrite the claim  \eqref{claim} (since $\sigma>0$)
$$
|\{(r/2)^\sigma-\rho^\sigma<0\}|\geq \frac{\delta}{M+1}.
$$
Since, $|\{((r/2)^\sigma-\rho^\sigma)_+=0\}|=|\{(r/2)^\sigma-\rho^\sigma<0\}|\neq 0$, using Lemma \ref{lemtech} with the function $((r/2)^\sigma-\rho^\sigma)_+$,  \eqref{ineq_gradrhosigma} and Poincaré's inequality, we get
\begin{eqnarray*}
\into((r/2)^\sigma-\rho^\sigma)^2_+\leq C\into|\nabla((r/2)^\sigma-\rho^\sigma)_+|^2 \leq C\into|\nabla\rho^\sigma|^2\leq C \into|\nabla Z|^2\leq C \into|\nabla^2 Z|^2.
\end{eqnarray*}
Using the Tchebychev's inequality and the previous one we get
$$
|\{\rho\leq r/4\}| \leq \dfrac{C}{(2^\sigma-1)^2}\left(\frac{4}{r}\right)^{2\sigma} \into|\nabla^2 Z|^2.
$$
This gives \eqref{first truc} for $\rho<\rho_0$.

\vskip0.2cm
\noindent{\bf Third part (Intermediate density values).} In this part we deal with the intermediate values of $\rho$ {\it i.e.} $\rho_0 \leq \rho \leq M$. Let us introduce
$$
\rt(t,x)=\sup(\rho_0,\inf(M,\rho(t,x))).
$$
Note that $|Z'(\rho)|$ is bounded by below on $[\rho_0,M]$, hence
$$
|\nabla\rt|=|\nabla\rho| \mathrm{1}_{\{ \rho_0\leq \rho\leq M\}}=\left|\frac{\nabla Z(\rho)}{Z'(\rho)}\right| \mathrm{1}_{\{ \rho_0\leq \rho\leq M\}}\leq C|\nabla Z|.
$$
Therefore, using Poincar\'e's inequality, since $|\Omega|=1$ and $\rho_0=r/4$,
\begin{eqnarray*}
&&\into|\rho-r|^2{\mathbf{1}}_{\{\rho_0\leq \rho\leq M\}}\leq \into|\rt-r|^2 \\
&&\qquad \qquad  \leq 2 \left[ \into\left|\rt-\into\rt\right|^2+\into\left|\into\rt-r\right|^2 \right]\\
&&\qquad \qquad \leq C \left[ \into|\nabla Z|^2+\left(\into\rt-\into\rho\right)^2 \right]\\
&&\qquad \qquad =C \left[ \into|\nabla Z|^2+\left(\int_{\{\rho>M\}}(M-\rho)+\int_{\{\rho<\rho_0\}}(\rho_0-\rho)\right)^2 \right] \\
&&\qquad \qquad \leq C \left[ \into|\nabla Z|^2+r\left(\int_{\{\rho>M\}}\rho\right)+\dfrac{r}{4}\left(\int_{\{\rho<\rho_0\}}\rho_0\right) \right]\\
&&\qquad \qquad \leq C \left[ \into|\nabla Z|^2+r\left(M^{-3(\beta-\eta)+3}\into\mathbf{1}_{\{\rho>M\}}\rho^{3(\beta-\eta)-2}\right)+\dfrac{\rho_0r}{4}\into \mathbf{1}_{\{\rho<\rho_0\}} \right].
\end{eqnarray*}
We get the desired result using the first and second parts of the proof.
\end{proof}
As mention before to complete the proof of lemma \ref{propnew}, we need also this following lemma:
\begin{lem}\label{lemK}
Assume that $G$ is a function of $\rho$ verifying $G$ is $C^2$ on $[\rho_0,M]$, and that there exists a constant $C>0$ such that 
\begin{eqnarray}\label{hypK1}
&& G(\rho)\leq C\sup(\rho^{3(\beta-\eta)-2},H(\rho)),\qquad \mathrm{for \ \ }\rho\geq M,\\
\label{hypK2}
&&G(\rho)\leq C,\qquad  \mathrm{for \ \ }\rho\leq \rho_0.
\end{eqnarray}
Then, there exists an other constant $C>0$ such that 
$$
\into G(\rho|r)\leq C\into|\nabla^2 Z|^2+C\into H''(\rho)\mu'(\rho)|\nabla\rho|^2,
$$
where $G(\rho|r)$ is given by
$$
G(\rho|r)=G(\rho)-G(r)-G'(r)(\rho-r).
$$
\end{lem}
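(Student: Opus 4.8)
The plan is to split $\into G(\rho|r)$ over the three density ranges $\{\rho<\rho_0\}$, $\{\rho_0\le\rho\le M\}$ and $\{\rho>M\}$ associated with the constants $0<\rho_0<r<M$ furnished by Lemma~\ref{lem horrible}, to bound $G(\rho|r)$ from above on each range, and then to invoke \eqref{first truc} and \eqref{second truc}. Since only an upper bound is wanted, I would first use $\into G(\rho|r)\le\into (G(\rho|r))_+$, so that any region on which $G(\rho|r)$ happens to be negative can simply be discarded, and then estimate $(G(\rho|r))_+$ on each of the three sets.

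On the intermediate range $\{\rho_0\le\rho\le M\}$ the function $G$ is $C^2$ and $r$ lies strictly inside $[\rho_0,M]$, so Taylor's formula with Lagrange remainder gives $G(\rho|r)=\frac{1}{2}G''(\xi)(\rho-r)^2$ for some $\xi\in[\rho_0,M]$, hence $|G(\rho|r)|\le C|\rho-r|^2$ with $C=\frac{1}{2}\max_{[\rho_0,M]}|G''|$; the first term on the right-hand side of \eqref{first truc} then controls $\into|\rho-r|^2\mathbf{1}_{\{\rho_0\le\rho\le M\}}$ by $\into|\nabla^2 Z|^2$. On the small range $\{\rho<\rho_0\}$, \eqref{hypK2} gives $G(\rho)\le C$, while $G(r)$ and $G'(r)$ are fixed constants and $|\rho-r|\le r$, so $G(\rho|r)\le C$ there, and the middle term of \eqref{first truc} yields $\into\mathbf{1}_{\{\rho<\rho_0\}}\le C\into|\nabla^2 Z|^2$. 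On the large range $\{\rho>M\}$, \eqref{hypK1} gives $G(\rho)\le C\sup(\rho^{3(\beta-\eta)-2},H(\rho))$, while the affine tail satisfies $-G(r)-G'(r)(\rho-r)\le C(1+\rho)$; since $\beta\ge1$ and $\gamma\ge1$ one has $1+\rho\le C\sup(\rho^{3(\beta-\eta)-2},H(\rho))$ for $\rho>M$ (indeed either $3(\beta-\eta)-2\ge1$ under \eqref{condbeta2mu} with $\eta$ small, or else $H(\rho)\ge C\rho$ for large $\rho$, in particular under \eqref{condbeta1mu}). Hence $G(\rho|r)\le C\bigl(\rho^{3(\beta-\eta)-2}+H(\rho)\bigr)$ on $\{\rho>M\}$, and the last term of \eqref{first truc} together with \eqref{second truc} bounds $\into\rho^{3(\beta-\eta)-2}\mathbf{1}_{\{\rho\ge M\}}+\into H(\rho)\mathbf{1}_{\{\rho\ge M\}}$ by $C\into|\nabla^2 Z|^2+C\into H''(\rho)\mu'(\rho)|\nabla\rho|^2$. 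Summing the three contributions gives the claimed inequality.

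I do not expect a genuine obstacle: all the functional-inequality content (Poincar\'e, the log-Sobolev estimate, the Tchebychev and measure arguments) has already been spent in Lemma~\ref{lem horrible}, and Lemma~\ref{lemK} is essentially a packaging step turning \eqref{first truc}--\eqref{second truc} into a bound on an arbitrary relative functional $G(\rho|r)$ with the prescribed growth. The only point that requires a little care is the absorption, for $\rho>M$, of the affine tail $G(r)+G'(r)(\rho-r)$ into $\sup(\rho^{3(\beta-\eta)-2},H(\rho))$; this is exactly where the standing hypotheses $\beta\ge1$ and $\gamma\ge1$ --- and the smallness of $\eta$, needed to keep $3(\beta-\eta)-2$ above $1$ in the regime \eqref{condbeta2mu} --- come into play.
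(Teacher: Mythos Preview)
Your argument is correct and follows essentially the same route as the paper: split $\int_\Omega G(\rho|r)$ according to $\{\rho<\rho_0\}$, $\{\rho_0\le\rho\le M\}$, $\{\rho>M\}$, control the middle region via the $C^2$ bound on $G$ and Taylor expansion, and then invoke Lemma~\ref{lem horrible}. The paper's proof is terser and does not spell out the absorption of the affine tail $-G(r)-G'(r)(\rho-r)$ into the dominant growth on $\{\rho>M\}$ and $\{\rho<\rho_0\}$; your explicit treatment of that point, and your remark on why $3(\beta-\eta)-2\ge 1$ (under \eqref{condbeta2mu}) or $H(\rho)\ge C\rho$ (under \eqref{condbeta1mu}) is needed to swallow the linear term, is a welcome clarification but not a different idea.
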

\begin{proof}
We have 
\begin{eqnarray*}
&&\into G(\rho|r)=\into G(\rho|r)\mathbf{1}_{\{\rho_0\leq\rho\leq M\}}+\into G(\rho|r)\mathbf{1}_{\{\rho<\rho_0\}}+\into G(\rho|r)\mathbf{1}_{\{\rho> M\}}\\
&&\qquad \leq C\left(\sup_{[\rho_0,M]} |G''|\right)\into |\rho-r|^2\mathbf{1}_{\{\rho_0\leq\rho\leq M\}} \\
&&\hskip3cm  +C\into\mathbf{1}_{\{\rho<\rho_0\}}+C\into \sup(\rho^{3(\beta-\eta)-2},H(\rho))\mathbf{1}_{\{\rho> M\}}.
\end{eqnarray*}
Lemma \ref{lem horrible} finishes the proof.
\end{proof}

To finish the proof of lemma \ref{propnew}, the idea is now to apply Lemma \ref{lemK} with $G$ equals $h$ and $H$. Note that \eqref{hypK2} is valid for both quantities in all configurations. We want to show that \eqref{hypK1} is also true for both quantities in all configurations. To this end, we consider two  cases. 

\vskip0.2cm
\noindent
{\bf Case 1 $\gamma >1$.}
Since $\beta >1$, for $\eta$ small enough,
\begin{equation}\label{betabusiness}
1<\beta +\eta<3(\beta-\eta)-2.
\end{equation}
Solving the ODE \eqref{condbeta2mu} for $\rho>r$, we find that there exists a constant $C>0$ such that for any $\rho > r$:
$$
\mu(\rho )\leq C\rho^{\beta+\eta},
$$
and using again \eqref{condbeta2mu} to get a bound by above on $\mu'$ from the bound on $\mu$, we get 
$$
\mu'(\rho)\leq C \rho^{\beta+\eta-1}.
$$
Since $h''(\rho)=\dfrac{\mu'(\rho)}{\rho}$, using the bound above on $\mu'$ and integrating in $\rho$ twice, we find
 that  there exists a constant $C>0$ such that for  every $\rho>M$, we have  
$$ 
h(\rho)\leq C\rho^{\beta+\eta},
$$
which, thanks to \eqref{betabusiness}, leads to
$$
h(\rho)\leq C\sup(\rho^{3(\beta-\eta)-2},\rho^\gamma).
$$
This proves \eqref{hypK1} for $h$. 

Since $\gamma>1$, then obviously, for $\rho$ big enough, we have
$$
H(\rho)=\frac{\rho^\gamma}{\gamma-1} \leq C \rho^\gamma.
$$
\vskip0.3cm
\noindent
{\bf Case 2 $\gamma=1$.}
Then we have $h(\rho)=H(\rho)=\rho\ln\rho$. Since $3(\beta-\eta)-2> 1$, for $\rho$ big enough
$$
h(\rho)=H(\rho)=\rho \ln\rho\leq \rho^{3(\beta-\eta)-2}
$$
and $H$ satisfies \eqref{hypK1}. 
\vskip0.3cm
Then in all cases, we can apply Lemma \ref{lemK} to find 
$$
\into 2 \,  \kappa \,  r_2 \,  h(\rho |r)+\into H(\rho |r)\leq C \left(\into|\nabla^2 Z(\rho)|^2+\into H''(\rho)\mu'(\rho)|\nabla\rho|^2\right),
$$
which ends the proof of lemma \ref{propnew}.

\subsection{Ends of proof of proposition \ref{prop:estregsol} and theorem \ref{th:exponentialdecay}}
We can now state and prove the proposition which is the goal of this section since it allows us to obtain the exponential time decay given in Theorem \ref{th:exponentialdecay} letting
$\zeta$ tends to zero if we obtain the exponential decay for $\mathcal{E}_\zeta$
with constant independent on $\zeta$. Here we adopt again the indice $\zeta$ for
reader's convenience.

Let us recall inequality \eqref{dissip} of Definition \ref{defweak1}:
\begin{equation*}
\dfrac{d\mathcal{E}_\zeta}{dt}(t) \leq  - \varepsilon  \, C_{\widetilde\nu,\alpha_1} \, \displaystyle \int_\Omega |\nabla^2 Z(\rho_\zeta)|^2 - \kappa \int_\Omega\mu'(\rho_\zeta)H''(\rho_\zeta) |\nabla \rho_\zeta|^2 
 - \int_\Omega  r_2 \,   \rho_\zeta \,  |u_\zeta|^2.
\end{equation*}
Applying Lemmas \ref{Result_part1} and \ref{propnew}, we obtain:
\begin{equation*}
\dfrac{d\mathcal{E}_\zeta}{dt}(t) \leq -C \int_\Omega \biggl[\rho_\zeta \left(|w_\zeta|^2+(\kappa(1-\kappa)+\varepsilon)|v_\zeta|^2\right) 
+H(\rho_\zeta |r) + 2\kappa r_2 h(\rho_\zeta | r)\biggr], 
\end{equation*}
with $C$ a constant which only depends on $\varepsilon, \tilde \nu, \alpha_1, \alpha_2, \kappa$ and $r_2$, but which is independent of $\zeta$. 
Now using the definition of $\mathcal{E}_\zeta$ given in \eqref{Energy} we finally obtain:
\begin{eqnarray}
\dfrac{d\mathcal{E}_\zeta}{dt}(t) &\leq&  -C\mathcal{E}_\zeta(t) . \label{dissipfin2}
\end{eqnarray}
Then using the Gronwall Lemma, we obtain
\begin{eqnarray}
\mathcal{E_\zeta}(t) &\leq&  \mathcal{E_\zeta}(0) \exp\left({-Ct}\right), \label{dissipfin3}
\end{eqnarray}
which directly gives the result of Theorem \ref{ineqentrel}. Then, since $\mathcal{E}_\zeta$ is lower semicontinuous and since we assume the strong convergence initially, passing to the limit $\zeta$ tends to zero leads to 
\begin{eqnarray*}
\mathcal{E}(t) &\leq&  \mathcal{E}(0) \exp\left({-Ct}\right), 
\end{eqnarray*}
which proves Theorem \ref{th:exponentialdecay}.


\section{Appendix}
\vskip0.5cm
Let us prove the following classical technical Lemma.

\begin{lem}\label{lemtech}
For any function $f\geq0$ such that $|\{f=0\}|\geq \delta>0$, there exists a constant $C$ depending on $\delta$ and $\Omega $, such that 
$$
\into f^2\,dx\leq C \into|\nabla f|^2\,dx.
$$ 
\end{lem}

\begin{proof}
From the fundamental theorem of calculus, for any $x,y\in \Omega$
$$
f(x)-f(y)=\int_0^1 \nabla f(tx+(1-t)y)\cdot(x-y)\,dt.
$$
Therefore
\begin{eqnarray*}
&&\into |f(x)|^2\,dx\leq\frac{1}{\delta} \int_{\{f=0\}}\left(\into |f(x)|^2\,dx\right)\,dy= \frac{1}{\delta}\int_{\{f=0\}}\into |f(x)-f(y)|^2\,dx\,dy\\
&&\qquad\leq  \frac{1}{\delta}\into\into |f(x)-f(y)|^2\,dx\,dy\leq  \frac{1}{\delta}\into\into\int_0^1 |\nabla f(tx+(1-t)y)|^2|x-y|^2\,dx\,dy\,dt\\
&&\qquad \leq C\into\into\left(\int_0^{1/2} |\nabla f(tx+(1-t)y)|^2\,dt+\int_{1/2}^{1} |\nabla f(tx+(1-t)y)|^2\,dt  \right)\,dx\,dy\\
&&\qquad \leq 2C\into\into\int_0^{1/2} |\nabla f(tx+(1-t)y)|^2\,dt\,dx\,dy\\
&&\qquad \leq 2C\into\int_0^{1/2} \left(\into|\nabla f(tx+(1-t)y)|^2\,dy\right)\,dt\,dx\\
&&\qquad \leq 2C\into\int_0^{1/2} \left(\int_{(1-t)\Omega}|\nabla f(tx+z)|^2\,\frac{dz}{(1-t)^3}\right)\,dt\,dx\\
&&\qquad \leq 2C\into\int_0^{1/2} \left(\int_{\Omega}|\nabla f(tx+z)|^2\,\frac{dz}{(1-t)^3}\right)\,dt\,dx\\
&&\qquad \leq 16C\into\int_0^{1/2} \left(\into|\nabla f(z)|^2\,dz\right)\,dt\,dx\\
&&\qquad \leq 8C\into|\nabla f(z)|^2\,dz.
\end{eqnarray*}

\end{proof}

\section*{Acknowledgements}
The third author acknowledges support from the team INRIA/RAPSODI and the Labex CEMPI (ANR-11-LABX-0007-01). The first author acknowledges the project  TELLUS INSU-INSMI "Approche crois\'ee pour fluides visco-\'elasto-plastiques: vers une meilleure compr\'ehension des zones solides/fluides". The first and the second authors acknowledge the ANR Project FRAISE managed by C. {\sc Ruyer-Quil}. The fourth author was partially supported by the NSF, DMS 1614918.



\begin{thebibliography}{99}
 


\bibitem{AlBr}
\sc T. Alazard, D. Bresch.
\rm Functional inequalities and strong Lyapunov functionals for free surface flows in fluid dynamics.
see arXiv:2004.03440 Submitted (2020).

\bibitem{AnMa} 
\sc P. Antonelli, P. Marcati. 
\rm On the Finite Energy Weak Solutions to a System in Quantum Fluid Dynamics, 
\it Commun. Math. Phys.\rm  l. $287$, $657-686$, (2009).

\bibitem{AnMa1} 
\sc P. Antonelli, P. Marcati. 
\rm The Quantum Hydrodynamics System in Two Space Dimensions, 
\it Arch. Rational Mech. Anal. \rm $203$ , $499-527$, (2012).

\bibitem{AnSp} 
\sc P. Antonelli, S. Spirito. 
\rm Global existence of finite energy weak solutions of
the quantum Navier-Stokes equations,
\it Arch. Rational Mech Analysis,
\rm 225, Issue 3, 1161--1199, (2017). 

\bibitem{AnSp1}
\sc P. Antonelli, S. Spirito. 
\rm Global existence of weak solutions to the Navier-Stokes-Korteweg equations.
\rm arXiv:1903.02441  (2019).




\bibitem{BrCoNoVi} 
\sc  D. Bresch, F. Couderc, P. Noble, J.-P. Vila.
 \rm  A generalization of the quantum Bohm identity: 
 Hyperbolic CFL condition for Euler--Korteweg equations, 
 \it C. R. Math. Acad. Sci. Paris \rm $354$, no. $1$, $39-43$, (2016).

\bibitem{BrDe} 
\sc D. Bresch, B. Desjardins. 
\rm Quelques mod\`eles diffusifs capillaires
de type Korteweg.
\it  Comptes rendus - Mécanique, \rm Volume 332, Issue 11,  881--886, (2004).

\bibitem{BrDe2003} 
\sc D. Bresch, B. Desjardins. 
\rm Existence of global weak solutions for 2D viscous shallow water equations and convergence to the quasi-geostrophic model,
\it Comm. Math. Phys., \rm  238, 211-223, (2003).

\bibitem{BrDeLi2003} 
\sc D. Bresch, B. Desjardins, C.-K. Lin. 
\rm On some compressible fluid models: Korteweg, lubrication and shallow water systems, 
\it Commun. Part. Diff. Eqs., \rm Vol. $28$, $1009-1037$, (2003).

\bibitem{BrDeZa}
\sc  D. Bresch, B. Desjardins, E. Zatorska. 
\rm Two-velocity hydrodynamics in fluid mechanics : Part II Existence of global $\kappa$-entropy solutions to compressible Navier-Stokes systems with degenerate viscosities, 
\it J. Math Pures Appl., \rm $104$, $801-836$, (2015).

\bibitem{BrGiLa} 
\sc D. Bresch, M. Gisclon, I. Lacroix-Violet.
\rm On Navier-Stokes-Korteweg and Euler-Korteweg Systems: application to quantum fluids models, 
\it Arch. Rational Mech. Anal. \rm 1--51, (2019).


\bibitem{BrVaYu} 
\sc D. Bresch, A. Vasseur, C. Yu. 
\rm Global Existence of Entropy-Weak Solutions to the Compressible Navier-Stokes 
Equations with Non-Linear Density Dependent Viscosities.
\rm 	arXiv:1905.02701 (2019).

\bibitem{BruMe09} 
\sc S. Brull, F. M\'ehats. 
\rm Derivation of viscous correction terms for the isothermal quantum Euler model., 
\it ZAMM Z. Angew. Math. Mech.,  \rm Vol. $90$, $219-230$,  (2010).

\bibitem{CaHi} 
\sc J.W. Cahn, J.E. Hilliard. 
\rm Free energy of a non uniform system I, interfacial 
free energy.
\it  J. Chem. Phys., \rm $28$, 258--267, (1958).


\bibitem{DuSe} 
\sc J. E. Dunn and J. Serrin. 
\rm On the thermomechanics of interstitial working.
\it  Arch. Rational Mech. Anal. \rm  $88$, 95--133, (1985).





\bibitem{FeZho} 
\sc D. Ferry and J.-R. Zhou. 
\rm Form of the quantum potential for use in hydrodynamic equations for semiconductor device modeling.,  
\it Phys. Rev. B,  \rm Vol $48$, $7944-7950$, (1993).




\bibitem{GiLa} 
\sc M. Gisclon, I. Lacroix-Violet. 
\rm About the barotropic compressible quantum Navier-Stokes equations,
\it   Nonlinear Analysis: theory, Methods ad Appplications,\rm   Vol. $128$,  $106-121$, (2015). 

\bibitem{Gr} 
\sc J. Grant. 
\rm Pressure and stress tensor expressions in the fluid mechanical formulation of the Bose condensate equations. \it  J. Phys. A: Math., Nucl. Gen.,  \rm Vol $6$,  $151-153$, (1973).  




\bibitem{HeMa} 
\sc M. Heida, J. Malek. 
\rm On compressible Korteweg fluid-like materials.
\it International J. Engineering Science,\rm  $48$, $1313-1324$, (2010).

\bibitem{Ju} 
\sc A.~J\"ungel. 
\rm  Global weak solutions to compressible Navier-Stokes equations for quantum fluids,
\it SIAM, J. Math. Anal.,  \rm $42$, $1025-1045$, (2010).

\bibitem{JuMa}
\sc A. J\"ungel, D. Matthes.
\rm The Derrida-Lebowitz-Speer-Spohn equations:
Existence, uniqueness and decay rates of the solutions.
\it SIAM J. Math. Anal.
\rm 39(6), (2008), 1996--2015.

\bibitem{Ko} 
\sc D.J. Korteweg. 
\rm Sur la forme que prennent les \'equations du mouvement
si l'on tient compte de forces capillaires caus\'ees par les variations de densit\'e consid\'erables mais continues et sur la th\'eorie de la capillarit\'e dans l'hypoth\`ese d'une variation continue de
la densit\'e.\it  Arch. Neerl. Sci. Exactes,\rm  $6$, 1--20, (1901).


\bibitem{LaVa}
\sc I. Lacroix-Violet, A. Vasseur.
\rm Global weak solutions to the compressible quantum Navier-Stokes
equation and its semi-classical limit. 
\it J. Math. Pures Appl. \rm (9) 114 (2018), 191--210.

\bibitem{LiXi}
\sc J. Li, Z.P. Xin.
\rm Existence of Weak Solutions to the Barotropic Compressible
 Navier-Stokes Flows with Degenerate Viscosities. 
\rm arXiv:1504.06826 (2015).


\bibitem{LoMo} 
\sc M. Loffredo and L. Morato. 
{\it On the creation of quantum vortex lines in rotating He II.},
 {\rm Il nouvo cimento}, \rm $108$B, $205-215$, (1993).

\bibitem{Ni} 
\sc R.I. Nigmatulin. 
\rm  Methods of mechanics of a continuous medium for the
description of multiphase mixtures.
\it J. App. Math. Mech., \rm $34$, $1197-1112$, (1970).

\bibitem{RoWi} 
\sc J.S. Rowlinson, B Widom. 
\it Molecular theory of capillarity.
{\rm Dover publications}, \rm Mineola, (2002).
.

\bibitem{Va} 
\sc J.D. Van der Waals. 
\rm Thermodynamische theorie der capillariteit in de
onderstelling van continue dichteisverandering.
\it J. Stat. Phys., \rm $20$, 197--244, (1979).

\bibitem{VaYu1} 
\sc A. Vasseur, C. Yu. 
\rm Global weak solutions to compressible quantum Navier-Stokes equations with damping,
\it  SIAM J. Math. Anal. \rm $48$ no. $2$, $1489-1511$, (2016).

\bibitem{VaYu2} 
\sc A. Vasseur, C. Yu.
\rm Existence of global weak solutions for 3D degenerate compressible Navier-Stokes equations, \it Inventiones mathematicae, \rm $1-40$,  (2016).

 \bibitem{Wy} 
 \sc R. Wyatt. {\it Quantum Dynamics with Trajectories.},  \rm Springer, New York, (2005).

\end{thebibliography}
\end{document}